\newtheorem{theorem}{Theorem}
\newtheorem{lemma}{Lemma}
\newtheorem{cor}{Corollary}
\newtheorem{problem}{Problem}
\newcommand{\be}{\begin{enumerate}}
\newcommand{\ee}{\end{enumerate}}
\newcommand{\beq}{\begin{equation}}
\newcommand{\eeq}{\end{equation}}
\def\N{{\mathbb{N}}}
\def\Z{{\mathbb{Z}}}
\title{The Diophantine Problem in Some Metabelian  Groups }
\author{Olga Kharlampovich\footnote{Hunter College and Graduate Center, CUNY},  Laura L\'opez\footnote{Graduate Center, CUNY}, Alexei Myasnikov\footnote{Stevens Institute}}
\date{}
\begin{document}

\maketitle
\begin{abstract} In this paper we show that the Diophantine problem for quadratic equations in Baumslag-Solitar groups $BS(1,k)$ and in wreath products $A \wr \mathbb{Z}$, where $A$ is a finitely generated abelian group and   $\mathbb{Z}$ is an infinite cyclic group, is decidable. We show also that one can decide if there are  non-trivial solutions of systems of equations without coefficients in these groups and give some sort of description of solutions.  Previously we stated that there is an algorithm that given a finite system of equations with constants in such a group decides whether or not the system has a solution in the group, this proof, unfortunately, has a gap.

\end{abstract}
\section{Introduction}
The problem of solving equations in various classes of groups and monoids
has been an active research field for many years now. The first general results on equations in groups appeared in the 1960's in the works of Lyndon \cite{lyndon} and Malcev \cite{malcev}.  In the 1970's  Makanin  \cite{mak77,mak82} proved
the solvability of (systems of) equations  for free monoids and free
groups. Makanin's decidability results have been extended to hyperbolic groups
and right-angled Artin groups \cite{DM06}, and it was shown that certain group operations
(graph products \cite{DL04}, HNN-extensions and amalgamated products over finite groups)
preserve decidability \cite{LS}. Moreover, a significant progress concerning the computational
complexity and the structure of solution sets have been obtained in recent years.
On the negative side,  by the Ershov-Romanovskii-Noskov
result the first-order theory of a finitely generated solvable group is decidable if and
only if the group is virtually abelian. The corresponding problem has been posed in \cite{KRRR}.
Ershov proved this statement \cite{E} in the
nilpotent case,  Romanovskii \cite{Rom}  generalized it to the polycyclic case, and finally, Noskov  \cite{Noskov} established the
most general statement for the case of a finitely generated solvable group.  Denote by ${\mathcal E\mathcal P_1}$ the problem of  solvability of one equation. Roman'kov showed that ${\mathcal E\mathcal P_1}$ is undecidable even for the subclass of all split equations of the form $w(x_1,\ldots ,x_n)=g,$ where $w(x_1,\ldots ,x_n)$ is a coefficient-free word and $g$ is an element of the underlying group $G$ that is a free nilpotent of class $\geq 9$ \cite{R1} (this bound was later reduced to $\geq 4$ in \cite{R}) or $G$ is a free metabelian non-abelian group \cite{R}. In \cite{DLS} the authors proved that ${\mathcal E\mathcal P_1}$ is decidable in the Heisenberg group that is free nilpotent of rank 2 and class 2. But the Diophantine problem (denoted by ${\mathcal E\mathcal P}$ in \cite{DLS})
is undecidable in any non-abelian free nilpotent group.

In this paper we show that the Diophantine problem for quadratic equations in solvable Baumslag-Solitar groups $BS(1,k)$ and in wreath products $A \wr \mathbb{Z}$, where $A$ is a finitely generated abelian group and   $\mathbb{Z}$ is an infinite cyclic group, is decidable, i.e. there is an algorithm that given a finite  quadratic system of equations with constants in such a group decides whether or not the system has a solution in the group. We show also that one can decide if there are  non-trivial solutions of systems of equations without coefficients in these groups.  In the published version of this paper we stated that there is an algorithm that given a finite system of equations with constants in such a group decides whether or not the system has a solution in the group, this proof, unfortunately, has a gap.

The metabelian Baumslag-Solitar groups are defined by  one-relator presentations $BS(1,k)=\langle a,b \mid b^{-1}ab=a^k
\rangle$, where $k \in \mathbb{N}$. If $k = 1$ then $BS(1,1)$ is free abelian of rank $2$, so the Diophantine problem in this group is  decidable  (it reduces to solving finite systems of linear equations over the ring of integers $\mathbb{Z}$). Furthermore, the first-order theory of $BS(1,1)$ is also decidable \cite{Szmielew}.  However, if  $ k \geq 2$ then $BS(1,k)$ is metabelain which is not virtually abelian, so the first-order theory of $BS(1,k)$ is  undecidable by \cite{Noskov}.  As we mentioned above, in free metabelian non-abelian groups equations are undecidable \cite{R}. In fact,  in a finitely generated metabelian group $G$ given by a finite presentation in the  variety ${\mathcal M}_2$  of metabelian groups, the Diophantine problem is undecidable asymptotically almost surely if the deficiency of the presentation is at least 2    \cite{GMO}.

 In general,  if  the quotient $G/\gamma_3(G)$ of a finitely generated metabelian group $G$ by its third term of the lower central series is a non-virtually abelian nilpotent group, then the decidability of the Diophantine problem in $G$ would imply decidability of the Diophantine problem for some finitely generated ring of algebraic integers $O_G$ associated with $G/\gamma_3(G)$.  The latter seems unlikely, since there  is a well-known  conjecture in number theory (see, for example, \cite{DL,PZ}) that states that the Diophantine problem in rings of algebraic integers is undecidable.  The discussion above shows that finitely generated metabelian groups $G$ with virtually abelian quotients $G/\gamma_3(G)$ present an especially interesting case in the study of equations in metabelian groups. The groups $BS(1,k)$  and wreath products $A \wr \mathbb{Z}$, where $A$ is a finitely generated abelian group and   $\mathbb{Z}$ is an infinite cyclic group, are the typical examples of such groups.  
 This gives also a new look at one-relator groups.  The groups $BS(1,k)$, $k\geq 2$,  were until recently the only known examples of one-relator groups with undecidable first-order theory. Recently, we were able to show (still unpublished)  that any one-relator group containing non-abelian group $BS(1,k)$ has undecidable first-order theory. 
 However, it is quite possible that equations in such groups are still decidable.

\section{Equations in $BS(1,k)$}
Our first main result is \begin{theorem} \label{main_th}
Quadratic equations in  $BS(1,k)$ are decidable. There is also an algorithm to decide if there is a  non-trivial solution of a system of equations without coefficients.
\end{theorem}
To prove the theorem we have to construct an algorithm that decides whether the set of formulas of the form $\exists
\bar x \, \,  \wedge _{i=1}^s t_i(\bar x, a, b) = 1$ is decidable, where $t_i(\bar x, a, b)$ is a
group word in the alphabet $\bar x, a, b$. Recall that the group $BS(1,k)$ is isomorphic to the group $\mathbb{Z}[1/k]
\rtimes \mathbb{Z}$, where $\mathbb{Z}[1/k] \cong ncl(a)$ and
$\mathbb{Z} \cong \langle b \rangle$, where $$\mathbb{Z}[1/k] =\{zk^{-i}, z\in\mathbb Z, i\in\mathbb N\}$$ and the action of $\langle b \rangle$ is given by $b^{-1}ub=u^k$. Thus, we can think of elements in $BS(1,k)$
as pairs $(zk^{-i} , r)$ where $z,r,i \in \mathbb{Z}$.  The product is defined as
$$(z_1k^{-i_1} , r_1)(z_2k^{-i_2} , r_2)=(z_1k^{-i_1}+z_2k^{-(i_2+r_1)} , r_1+r_2).$$

The inverse of an element $(zk^{-y},r)$ is $(-zk^{-(y-r)}, -r)$
 
 The following lemma reduces systems of equations in $BS(1,k)$ to systems of equations in $\mathbb Z$.

\begin{lemma}
Any finite system of equations in $BS(1,k)$ is equivalent to a finite system of equations of the form
\begin{equation}\label{(1)}
\sum_i z_i k ^{-y _i}(\sum_j \pm k^{\tau_{ij}(\bar r)})-\sum_t \gamma _tk^{\tau_t(\bar r)}=0 
\end{equation}

and \begin{equation}\label{(2)}
\sum \beta_j r_j = \delta.
\end{equation}
where
$ \tau _t(\bar r), \tau_{ij}(\bar r) = \sum_q \alpha_q r_q +c_q$ and where $\alpha_q, c_q, \delta, \gamma _t, \beta_j \in \Z$, and $ y_i,z_i,r_i,$ are variables. 

The product  $z_i k ^{-y _i}$ can be also considered as one variable in $\mathbb{Z}[1/k].$ 
\end{lemma}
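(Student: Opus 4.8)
The plan is to push each group equation through the semidirect-product coordinates of $BS(1,k)\cong \mathbb{Z}[1/k]\rtimes\mathbb{Z}$. First I would represent every group variable $x$ occurring in the system by a pair $(z_x k^{-y_x},\,r_x)$, introducing fresh integer variables $z_x,y_x,r_x$, and record that the constants are $a=(1,0)$ and $b=(0,1)$. Since words may contain inverses, I would next derive, directly from the given product rule, the inverse formula $(z k^{-y},r)^{-1}=(-z\,k^{-(y-r)},\,-r)$, so that every letter appearing in a term $t_i(\bar x,a,b)$ has a known pair representation: $x$, $x^{-1}$, $a^{\pm1}=(\pm1,0)$, and $b^{\pm1}=(0,\pm1)$.

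The core step is to multiply out each term $t_i$ factor by factor and track the two coordinates separately. The second coordinate is purely additive under the product, so a word of length $n$ yields the linear form $\sum_m s_m$, where $s_m$ is the $\mathbb{Z}$-component of the $m$-th factor; this is an integer-linear combination of the $r$-variables plus a constant, and equating it to $0$ gives an equation of type (\ref{(2)}). For the first coordinate, iterating the product rule shows that the $m$-th factor $(\zeta_m k^{-\iota_m},s_m)$ contributes $\zeta_m k^{-\iota_m}$ multiplied by $k^{-(s_1+\cdots+s_{m-1})}$, i.e. shifted by the sum of the $\mathbb{Z}$-components of all preceding factors.

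Now each $s_\ell$ is either an $r$-variable (variable factor), its negative (inverse variable factor), or an integer ($0$ for $a^{\pm1}$, $\pm1$ for $b^{\pm1}$), so the shift $s_1+\cdots+s_{m-1}$ is an affine-integer form $\sum_q \alpha_q r_q + c$ in the $r$-variables. I would then group contributions by their origin: an occurrence of $x_i$ or of $x_i^{-1}$ carries the factor $z_i k^{-y_i}$ (up to the sign and the extra $+r_i$ shift read off the inverse formula), so all such occurrences assemble into $z_i k^{-y_i}\bigl(\sum_j \pm\, k^{\tau_{ij}(\bar r)}\bigr)$; occurrences of $a^{\pm1}$ carry coefficient $\pm1$ and assemble into the terms $-\gamma_t k^{\tau_t(\bar r)}$ with $\gamma_t\in\mathbb{Z}$; and $b^{\pm1}$ contribute nothing to the first coordinate. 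Equating this first coordinate to $0$ yields exactly an equation of type (\ref{(1)}). Applying this to each of the $s$ equations produces a finite system of the forms (\ref{(1)}) and (\ref{(2)}), and because the coordinate computation is an identity of formal expressions, a tuple of group elements solves the original system if and only if the associated integers solve the derived system; hence the two systems are equivalent.

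The main obstacle I expect is organizational rather than conceptual: keeping the exponent bookkeeping correct across the non-commutative multiplication, in particular the sign and the $+r_i$ shift introduced by inverse factors, and verifying that the accumulated shifts remain affine-linear in the $r$-variables so that the exponents $\tau_{ij}(\bar r)$ and $\tau_t(\bar r)$ genuinely fit the prescribed template. I would also pay attention to the signs attached to the inner powers $k^{\tau_{ij}}$ coming from inverse occurrences, absorbing them into the stated form via the integer coefficients $\gamma_t$ and the sign conventions of (\ref{(1)}); and to the backward direction of the equivalence, which holds precisely because the reduction never used any relation beyond the defining product and inverse formulas, so the recovered pairs automatically lie in $\mathbb{Z}[1/k]\rtimes\mathbb{Z}$ and satisfy the word equations.
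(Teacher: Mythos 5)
Your proposal is correct and follows essentially the same route as the paper: pass to the coordinates of $\mathbb{Z}[1/k]\rtimes\mathbb{Z}$, expand the product of pairs factor by factor, and equate the two coordinates to zero to obtain equations of types (\ref{(1)}) and (\ref{(2)}). You are in fact more thorough than the paper's three-line proof, which never discusses inverse occurrences; your one loose end --- the $\pm$ signs appearing in the inner sums when a variable occurs with both signs, which the literal form (\ref{(1)}) does not display --- is an imprecision inherited from the statement itself and is harmless for the rest of the paper's argument.
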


\begin{proof}
Note that

$$(z_1k^{-y_1}, r_1) \cdot (z_2k^{-y_2}, r_2) \cdots (z_nk^{-y_n}, r_n) = $$
$$(z_1k^{-y_1}+ z_2k^{-(y_2+r_1)} + ... + z_nk^{-(y_n+r_1+...+r_{n-1})},r_1+...+r_n)$$

The system of  equations in the first and second component corresponds to a system of equations of the form (\ref{(1)}) and (\ref{(2)}), respectively.

\end{proof}

To solve a system of equations in $BS(1,k)$, we begin by solving system (\ref{(2)}). This system is just a linear system of equations $AX=B$ with integer coefficients, where $X=(r_1,\ldots ,r_n)^T$ and $A$ is the matrix of the system. Using integral elementary column operations on $A$ and row operations on $(A|B)$ we can obtain an  equivalent system $\bar A\bar X=\bar B$ such that $\bar A$ has a diagonal form. This is Smith normal form. Column operations on $A$  correspond to change of variables. Row operations on $(A|B)$ correspond to transformations of the system of equations into an equivalent system.
If the system $\bar A\bar X=\bar B$ does not have a solution, then the corresponding system of equations in the group does not have a solution. If the system $\bar A\bar X=\bar B$  is solvable, then we change  variables $X$ to $\bar X$. Some of the new variables $\bar X$ will have fixed integer values and some will be arbitrary integers. Substitute those $\bar X$'s  into system (\ref{(1)}).   We only have to check that there exist integer solutions $Z=\{z_1,\ldots , z_n\}$, $Y=\{y_1,\ldots , y_n\}$ and remaining $\bar X$ that we denote $\hat X = \{r_{i_1} \ldots r_{i_m}\}$.

We say that a system of equations $S(X)=0$ with variables $X$ is equivalent to a disjunction of  systems $S_1(X)=0,\ldots ,S_m(X)=0$ if every solution of $S(X)=1$ is a solution of one of $S_i(X)=0, i=1,\ldots ,m$ and every solution of $S_i(X)=0$ is a solution of $S(X)=0$. One can consider  system (\ref{(1)}) as a linear system with  variables  $z_ik^{-y_i},$ and   linear combinations of exponential functions as coefficients (which contain variables $\hat X$). It can be transformed using row operations to an equivalent disjunction of triangular like systems (with respect to variables $z_s k ^{-y _s}$, $s=1,\ldots ,q$) of the following form:
 \begin{equation}\label{(3)}
z_s k ^{-y _s}(\sum_j \delta _{sj} k^{\tau_{sj}(\bar r)})=\sum _{i>q}z_i  k ^{-y _i}(\sum_j \delta _{ij} k^{\sigma_{ij}(\bar r)})+\sum_t \gamma _tk^{\tau_t(\bar r)}, s=1,\ldots ,q,
\end{equation}

 \begin{equation}\label{4}
\sum_j a_{j}k^{\phi_{j}(\bar r)}=0\  ({\rm system\  of\  such\ equations}). 
\end{equation}
where $\delta _{sj}, \delta _{ij}, \gamma _t, a_{j} \in \mathbb Z$ and $\tau_{sj}, \sigma_{ij}, \tau_t, \phi_{j}$  are linear combinations of elements in $\hat X$ and constants. 
We will get a disjunction of systems because when multiplying equations by some coefficient we have to consider separately the case when this coefficient is zero.
\newline 
Now we have to solve systems (\ref{(3)}) and (\ref{4}). We will first find all solutions of system (\ref{4}). Semenov's ideas in \cite{Semenov2} (where he
proved that the theory of $\langle \mathbb{Z}, +, k^x \rangle$ is decidable) can be used to prove the following lemma.

\begin{lemma} \label{lemma1-S} 
Any system of equations over $\Z$ of the form
 \begin{equation}\label{5}
F(\bar y)=\sum_j \beta _jk^{y_j}+C=0, 
\end{equation}
where $\beta_j\in \Z$, $k\in\mathbb N, k>1,$
 with  variables $\bar y
=(y_1,...,y_n)$, is equivalent to a disjunction of linear systems of equations over $\Z$.
\end{lemma}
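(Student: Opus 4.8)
The plan is to eliminate the exponential terms by a \emph{dominant-term} (base-$k$) analysis in the spirit of Semenov's treatment of $\langle \Z, +, k^x\rangle$. The key heuristic is that the whole content of the equation $\sum_j \beta_j k^{y_j} + C = 0$ is controlled by the relative order of the exponents $y_j$: once two exponents are far apart, the larger power of $k$ dwarfs the smaller, so the equation can only hold for a tightly constrained (in fact linearly describable) family of exponent configurations. First I would absorb the constant into the sum by regarding $C$ as an extra monomial $C k^{y_0}$ with $y_0$ a fixed symbol set to $0$, so that the equation reads $\sum_{j=0}^{n}\beta_j k^{y_j}=0$ with $\beta_0=C$.

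Next I would split into finitely many cases according to the \emph{type} of the tuple $(y_0,\ldots,y_n)$, where a type specifies which of the $y_j$ coincide and lists the distinct values in strictly increasing order. Each type contributes the linear equalities $y_i=y_j$ and the strict inequalities $w_1<w_2<\cdots<w_m$ (over $\Z$, i.e.\ $w_{l+1}\ge w_l+1$) for its distinct values, and inside the type the equation collapses, after discarding groups whose coefficients sum to zero, to
\[
\sum_{l=1}^{m} B_l k^{w_l}=0, \qquad B_l=\sum_{\{\,j\,:\,y_j=w_l\,\}}\beta_j\in\Z \ \text{fixed, } B_l\neq 0 .
\]
The heart of the matter is then to show that this equation, for fixed nonzero $B_l$ and unknowns $w_1<\cdots<w_m$, is equivalent to a finite disjunction of linear systems. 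Writing $B_m k^{w_m}=-\sum_{l<m}B_l k^{w_l}$ and dividing by $k^{w_{m-1}}$ yields $|B_m|\,k^{\,w_m-w_{m-1}}\le\sum_{l<m}|B_l|$, since every exponent $w_l-w_{m-1}$ on the right is $\le 0$; hence the top gap $d=w_m-w_{m-1}$ is bounded by $\log_k\!\sum_{l<m}|B_l|$, a constant depending only on the coefficients (and, crucially, this estimate is insensitive to the signs of the $w_l$, so negative exponents cause no trouble).

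I would then branch over the finitely many admissible values of $d$. Substituting $w_m=w_{m-1}+d$ merges the two top terms into a single coefficient $B'=B_m k^{d}+B_{m-1}$ attached to $k^{w_{m-1}}$. If $B'\neq 0$, the equation has one fewer distinct exponent and I repeat the gap bound on $w_{m-1}-w_{m-2}$; if $B'=0$, the two top terms vanish, leaving an equation of the same shape with two fewer terms and with $w_{m-1}$ (hence $w_m$) now free above $w_{m-2}$. In either case the number of active terms strictly decreases, so the recursion terminates: a single surviving nonzero term has no solution and is pruned, while the empty sum is the identically true $0=0$. Each leaf of this finite tree records a system of linear equations (the fixed gaps $w_{l+1}-w_l=d_l$ together with the groupings $y_i=y_j$) plus the order inequalities, and the full solution set is the finite disjunction over all types and all branches. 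A genuine system of several equations of the form (\ref{5}) is handled by performing the type case-split once, simultaneously for all variables, and then applying the gap analysis to each equation; distributing the per-equation disjunctions across the conjunction again gives a finite disjunction of linear systems over $\Z$.

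The main obstacle I anticipate is organizing the recursion in the third step so that it provably terminates with \emph{finitely many} linear systems, in particular correctly treating the degenerate branches where the merged leading coefficient $B'$ vanishes and a pair of variables becomes free, and checking that the gap estimate remains uniform across the signs of the exponents. The residual order inequalities are themselves linear over $\Z$; if pure equations are insisted upon they can be resolved by further splitting, but in any event the resulting conditions are decidable, which is all that the subsequent algorithm for (\ref{(3)}) and (\ref{4}) requires.
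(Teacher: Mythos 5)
Your core mechanism --- bounding the top gap $d=w_m-w_{m-1}$ by $\log_k$ of the coefficient sum, branching over its finitely many admissible values, merging the two leading terms, and recursing on the number of active terms --- is sound, and it is the same Semenov-style dominant-term analysis the paper uses (the paper organizes it differently: a sign-split plus multiplication by a suitable power of $k$ to force non-negative exponents, then a graph/cycle argument showing some pair of exponents has bounded difference, then elimination of one variable at a time; your division trick handles negative exponents directly, which is a nice simplification). The genuine problem is at the leaves. Your leaves are conjunctions of linear equations \emph{and strict order inequalities}: for example, for $-2k^{y_1}+k^{y_2}-2k^{y_3}+k^{y_4}=0$ with $k=2$, the type $y_1<y_2<y_3<y_4$ produces the leaf $\{y_2=y_1+1,\ y_4=y_3+1,\ y_2<y_3\}$, in which $y_3$ is otherwise free. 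The lemma demands pure systems of equations, and your remark that the residual inequalities ``can be resolved by further splitting'' is false: a set such as $\{(y_1,y_3)\in\Z^2 : y_3>y_1\}$ is \emph{not} a finite union of solution sets of linear systems over $\Z$. Indeed, each such solution set is a coset $v+L$ of a subgroup $L\le\Z^2$; a coset of a rank-$2$ subgroup contains points with $y_3-y_1$ arbitrarily negative, so only cosets of rank $\le 1$ could be used, i.e.\ finitely many lines, which cover only $O(T)$ of the roughly $T^2$ points of the region inside a box of side $T$. So no finite amount of splitting turns such a leaf into equations, and as written your argument proves a weaker statement (equivalence to a disjunction of equations-plus-inequalities), not the lemma.

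The gap is repairable from inside your own argument, and the repair is exactly what the paper's bookkeeping achieves automatically: simply \emph{discard} all unresolved inequalities from every leaf. The order constraints are needed only for \emph{completeness} (every solution lies in some type, hence lands in some leaf); they play no role in \emph{soundness}, because once the grouping equalities $y_i=y_j$ and the accumulated gap equalities $w_{l+1}=w_l+d_l$ hold (these were chosen precisely so that the merged coefficients vanish, terminating in the empty sum), the expression $\sum_j\beta_jk^{y_j}+C$ vanishes \emph{identically} on the affine set they define --- the cancellation is an algebraic identity, insensitive to how the remaining free exponents are ordered. Writing $E_\ell$ for the equations and $I_\ell$ for the inequalities of leaf $\ell$, completeness gives (solution set) $\subseteq\bigcup_\ell(E_\ell\wedge I_\ell)\subseteq\bigcup_\ell E_\ell$, while the identity gives $\bigcup_\ell E_\ell\subseteq$ (solution set); hence the solution set equals $\bigcup_\ell E_\ell$, a finite disjunction of genuine linear systems (in the example above, the leaves across all types union to $\{y_2=y_1+1,\,y_4=y_3+1\}\cup\{y_4=y_1+1,\,y_2=y_3+1\}$). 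Note this is how the paper avoids the issue altogether: it never imposes inequalities, but only ever adds bounded-difference \emph{equations} $\hat y_i=\hat y_j+c$ and declares a branch successful exactly when the substituted expression becomes identically $0=0$. With this one correction (which also carries over verbatim to your treatment of several simultaneous equations), your proof is correct.
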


\begin{proof}
 Let $\bar y = (y_1, \ldots ,y_n)$ and let $\lambda: \{y_1, \ldots ,y_n\} \rightarrow \{+, -\}$ be a map that assigns to each variable a positive or negative sign (the agreement will be that zero has a positive sign). System (\ref{5}) over $\mathbb{Z}$ is equivalent to a disjunction of $2^n$ systems each with an assignment $\lambda$. Now we fix one of these systems and we show how to describe all solutions. 
 
We begin by rewriting each equation so that all variables are positive. We may do this by substituting in each equation $-y_i$ for $y_i$ for each $y_i$ that has a negative assignment. Then we multiply each equation by $k^{y_{i_1}+\ldots+y_{i_s}}$, where $y_{i_1}, \ldots ,y_{i_s}$ are all the variables whose signs were changed. For instance, suppose we have an equation $k^{y_1} -k^{y_2} + k^{y_3} + c= 0$ with assignment $y_1 < 0, y_2 \geq 0, y_3 \geq 0$. Then we rewrite it as $k^{-y_1} - k^{y_2} + k^{y_3} +c = 0$ with assignment $y_1\geq 0,y_2\geq 0,y_3\geq 0$ and multiply the equation by $k^{y_1}$. We then obtain the equation
$$1- k^{y_1 + y_2} + k^{y_1 + y_3}+ck^{y_1}=0$$ with assignment $y_1\geq 0,y_2\geq 0,y_3\geq 0$.
We now obtain a system over $\N$ of the form 
$$\sum_i \beta _ik^{\sum_j y_{ij}}+C=0$$

Next, we substitute all sums in exponents of $k$ by new variables to obtain a system of equations over $\N$ of the form 
\begin{equation} \label{eq-6}
F'(\bar y) = \sum_i \beta _ik^{\hat y_i}+C=0
\end{equation} 
\newline
\textbf{Claim:} A finite system  of equations in the form (\ref{eq-6}) is equivalent  to a disjunction of  systems of linear equations of the form $\{\hat y_1 = \hat y_2 + c_1, \hat y_2 = \hat y_3 + c_2, \ldots, \hat y_{s-1} = \hat y_s + c_s\}$. 
\newline
\begin{proof}
Denote the new variables as $\bar y'=(\hat y_1, \ldots \hat y_m)$.
We begin by showing that for each $i$, there is a $\Delta_i \in \mathbb{N}$ such that system (\ref{eq-6}) does not have a solution if $\hat y_i > \hat y_j + \Delta_i$ for all $j \neq i$.  

Fix $i$. We can rewrite each equation in the system in the form $k^{\hat y} +\sum_i \gamma _ik^{\hat x_i} = \sum_j \delta _jk^{\hat z_j}+C$, where all $\gamma _i, \delta _j$ are positive integers, $\hat y=\hat y_i$ and $\hat x_i, \hat z_j$ are all variables in $\bar y' - \hat y_i$. For each equation, let $\Delta > \log _k(\sum_j \delta_j+C)$ if  $C\geq 0$ and $\Delta > \log _k(\sum_j \delta_j)$ if $C<0$, and $\hat y > \hat x_i + \Delta$ and $\hat y > \hat z_j + \Delta$ for all $i, j$. Then $k^{\hat y}>k^{\Delta} k^{\hat z_j} > (\sum_j \delta _j+C) k^{\hat z_j}$ for all $j$. Thus, the right side of the equation will always be smaller than the left side, and  the equation has no solution. Thus, we can take $\Delta_i$ to be the smallest such $\Delta$.

So we have shown that for all variables $\hat y_i$, if $F'$ (or a finite system of equations where each equation has form $F'$) has a solution then there is a $j \neq i$ such that $\hat y_i \leq \hat y_j + \Delta_i$.  Now consider a finite graph $\mathcal{G}$ with $n$ vertices labeled $\hat y_1, \ldots ,\hat y_m$ and directed edges from $\hat y_i$ to $\hat y_j$ whenever $\hat y_i \leq \hat y_j + \Delta_i$. Note that each vertex must be the initial vertex of some edge and thus the graph must contain a cycle in every connected component. Suppose there is a cycle $\hat y_{i_1}, \ldots ,\hat y_{i_s}=\hat y_{i_1}, \, s \leq m+1$.  Then $$\hat y_{i_1} \leq \hat y_{i_2} + \Delta_{i_1} \leq \hat y_{i_3} + \Delta_{i_2} + \Delta_{i_1} \leq \ldots \leq \hat y_{i_s} + \Delta_{i_{(s-1)}} + \ldots + \Delta_{i_1}$$ $$= \hat y_{i_1} + \Delta_{i_{(s-1)}} + \ldots + \Delta_{i_1}$$
Therefore for any $2 \leq j \leq s-1$, we have that $$\hat y_{i_1} - \sum_{t=1}^{j-1} \Delta_{i_t} \leq \hat y_{i_j} \leq \hat y_{i_1} + \sum_{t=j}^{s-1} \Delta_{i_t}$$
Therefore, the value of any $\hat y_{i_j}$ with $2 \leq j \leq s-1$ is bounded by the value of $\hat y_{i_1}$. 

Fix a $y_{i_j}$ and let $\Delta_{j_1}= \sum_{t=1}^{j-1} \Delta_{i_t}$ and $\Delta_{j_2}= \sum_{t=j}^{m-1} \Delta_{i_t}$. Then we may replace the equation $F'(\bar y)$ by a disjunction of equations $G(\bar y \backslash \hat y_{i_j})$ where $G$ is the same as the formula $F'$, but $\hat y_{i_j}$ is replaced by $\hat y_{i_1} -\Delta_{j_1}$ in one equation, $y_{i_1} -\Delta_{j_1}+1$ in the next, and so on until $y_{i_1} +\Delta_{j_2}$.

Now we may eliminate variables from each equation in $m$ variables inductively, obtaining at each step a new disjunction consisting of  a system of equations in less variables and a set of linear equations of the form $\hat y_i = \hat y_j + c_i$ which we use to eliminate one variable. At the last level of each branch of this procedure, we will have one of three possible outcomes: 
\begin{enumerate} 

\item All exponential terms have canceled out and we have a false equation with constant terms. In this case there is no solution to (\ref{eq-6}) or (\ref{5}) in this branch.
\item There is an equation $0=0$ (i.e. all terms cancel out after a substitution). In this case all variables (after renumbering) $ \hat y_{i+1}, \ldots, \hat y_m$ that remained in the previous step of the branch are taken as free variables, and we obtain a general solution $\hat y_1=\hat y_2 + c_1, \hat y_2 = \hat y_3 + c_2, \ldots ,\hat y_i=\hat y_{i+1} + c_i$ to system (\ref{eq-6}) along this branch.
 \item There is one equation left of the form $\beta_s k^{y_s} + C =0$. In this case, this equation has a unique solution $y_s=b$ or no solution.
\end{enumerate} 
In the second case, any solution in $\Z$ of the linear system $\hat y_1=\hat y_2 + c_1, \hat y_2 = \hat y_3 + c_2, \ldots ,\hat y_i=\hat y_{i+1} + c_i$  will be a solution to system (\ref{eq-6}) since when we substitute the variables into this equation, the same cancellations will occur and we will remain with the equation $0=0$.
This proves the claim.
\end{proof}
System (\ref{5}) can also be reduced to a disjunction of linear systems by substituting each $\hat y_i$ back to the corresponding linear combination of $y_1, \ldots, y_n$. This completes the proof of the lemma.
\end{proof}

System (\ref{4}) is also equivalent to a disjunction of linear systems --we first replace sums appearing in the exponent of $k$ by new variables and then apply Lemma \ref{lemma1-S}.
We now solve this disjunction of linear systems --if it is solvable, the general solution will correspond to the disjunction of  systems of linear equations on $\hat X$.  We fix one of these systems and substitute those $r_i$'s that  are fixed numbers into system (\ref{(3)}) that has triangular form.  Denote the new tuple of $r_i$'s by $\tilde X$.

{\bf Proof of Theorem 1.} 

We will first prove the second statement. Suppose a system  of  equations in $BS(1,k)$ does not have coefficients.  Then systems (\ref{(1)}) and (\ref{(3)}) do not have the last term.

The system has a non-trivial solution if and only if system (4) has a non-trivial solution. We can describe all solutions of (4) because they come from systems of linear equations.  Then we substitute any solution of (4) in (3) and find  all solutions of (3) in $Z(1/k)$ as a homogenous system of linear equations  over $Z(1/k)$.

 It cannot happen that (4) has only finitely many solutions and the number of equations is more than the number of variables (so each $z_i=0$). Indeed then there are no  $a'$s in the solution and we have a homogeneous  linear system in abelian group that either has only zero solution or infinitely many.

Now we will  show that there is an algorithm to decide if a quadratic equation has a solution.  Every quadratic equation is equivalent to an equation in the standard form 
$$\Pi _{i=1}^g[x_i,y_i]\Pi _{i=1}^n z_i^{-1}c_ix_i=1$$ or
$$\Pi _{i=1}^{g}[x_i,y_i]\Pi _{i=1}^n z_i^{-1}c_ix_i=1$$

The commutator width  of $BS(1,n)$  is one. 
Indeed, since the relator  has $b$-exponent $0$ and $a$-exponent $1-k$, any word on $a,b$ tat represents an element of the derived subgroup must have $b$-exponent $0$ and $a$-exponent a multiple of $k-1$. Therefore, each element of the derived subgroup may be written as $b^{s}a^{m(k-1)}b^{-s},$ which is equal to
$$b^sa^{mk}b^{-s}b^sa^{-m}b^{-s}=b^{s-1}a^mb^{1-s}b^sa^{-m}b^{-s}=b^{s-1}a^mba^{-m}b^{-s}=[b, a^{-m}b^{-s}].$$

The verbal width of the subgroup generated by the squares in $BS(1,n)$ is two. Hence an orientable equation of genus $g\geq 1$ has a solution if and only if $\Pi _{i=1}^nc_i\in BS(1,n)'$. A non-orientable equation of genus $g\geq 2$ has a solution if and only if $\Pi _{i=1}^nc_i$ belongs to this verbal subgroup. Therefore (except non-orientable of genus 1) we only have to deal with equations of genus zero.
$$\Pi _{i=1}^n z_i^{-1}c_iz_i=1.$$
\begin{lemma}
The question about the existence of solutions to quadratic equation of genus zero reduces to the question about existence of solutions to certain system  (\ref{4}) or, equivalently, (\ref{eq-6}).
\end{lemma}
\begin{proof} Consider $$\Pi _{i=1}^n x_i^{-1}\bar c_ix_i=1$$ and let $x_i=(z_ik^{-y_i}, r_i)$ and $\bar c_i=(c_ik^{-d},s_i).$ Then $$\Pi _{i=1}^n(z_ik^{-y_i},r_i)(c_ik^{-d_i},s_i)(z_ik^{-y_i},r_i)^{-1}=\Pi _{i=1}^n(z_ik^{-y_i},r_i)(c_ik^{-d_i},s_i)(-z_ik^{-y_i+r_i},-r_i)$$
$$=(\sum _{i=1}^n k^{-\sum _{j=1}^{i-1}s_i}(c_ik^{-d_i-r_i}+z_ik^{-y_i}(1-k^{-s_i})), \sum _{i=1}^n s_i)=(0,0).$$
Therefore, $\sum _{i=1}^n s_i$. 

There are two possible cases. In the first case $s_i=0$ for all $i=1,\ldots ,n$, then the system is equivalent to a system $\sum _{i=1}^nc_ik^{\bar y_i}=0$ for new integer variables $\bar y_i, i=1,\ldots ,n$. This is exactly system (\ref{eq-6}).

In the second case, some $s_i$ is non-zero. Take $s=gcd(|s_1|,\ldots ,|s_i|)$, then $(k^{s}-1)=gcd((k^{|s_i|}-1), i=1,\ldots ,n)$ and the quadratic equation has a solution if and only if the  congruence
$$\sum _{i=1}^nc_ik^{\bar y_i}\equiv 0 (mod (k^{s}-1))$$ has a solution in $\mathbb Z(1/k)$. Therefore we only have to consider $\bar y_i$'s  such that $-s\leq y_i\leq s$. This finishes the proof in the second case.
\end{proof}

\section {Restricted wreath products with $\Z$}
The restricted wreath product $G \wr \Z$ is isomorphic to the semidirect product $\oplus_{i \in \Z} G \rtimes \Z$, where the action of $\Z$ on $\oplus_{i \in \Z} G$ is by translation of indices, that is, $k \cdot \{g_n\}_{n \in \Z} = \{g_{n+k}\}_{n \in \Z}$. The product of two elements $(\{g_n\}_{n \in \Z}, k) \cdot (\{h_n\}_{n \in \Z}, l)$ is $(\{g_n + h_{n+k}\}_{n \in \Z}, k+l)$. When $G= \Z_2$ the group is called the lamplighter group. 

If $A$ is finitely generated abelian, then $A=\mathbb Z^m\oplus \mathbb Z_{n_1}\oplus\ldots \oplus \mathbb Z_{n_k}$ as an additive group. Denote by $R$ the ring $\mathbb Z^m\oplus \mathbb Z_{n_1}\oplus\ldots \oplus \mathbb Z_{n_k}.$ In this case $A \wr \Z$  is isomorphic to the group of matrices of the form
\begin{equation*} 
M = \left(\begin{array}{cc} t^x & P \\ 0 & 1 \end{array}\right)
\end{equation*}
where $P$ is a Laurent polynomial in $R[t, t^{-1}]$.  Note that $P=f(t)t^{-k}$ where $f(t) \in R[t]$ and $k \in \N$. 

We will first show that equations in $A \wr \Z$ are decidable for $A=\Z_n$ and $A= \Z$. We will denote $\Z_n \wr \Z$ by $L_n$ and $\Z \wr \Z$ by $L$.
\begin{theorem} \label{th-Ln}
Quadratic equations in $L_n$  are decidable. There is also an algorithm to decide if an arbitrary coefficient free system has a non-trivial solution.
\end{theorem}

\begin{proof}
The product of $n$ elements in $L_n$ is
\begin{equation*}
\begin{pmatrix} t^{x_1} & P_1 \\ 0 & 1 \end{pmatrix} \ldots
\begin{pmatrix} t^{x_n} & P_n\\ 0 & 1 \end{pmatrix}
=
\begin{pmatrix} t^{x_1+ \ldots + x_n} & Q \\ 0 & 1 \end{pmatrix}
\end{equation*}
where $P_j= f_j(t)t^{-y_j} $ and 
\begin{equation*}
Q= f_n(t)t^{-y_n}t^{x_1 + \ldots + x_{n-1}} + f_{n-1}(t)t^{-y_{n-1}}t^{x_1 + \ldots +x_{n-2}} + \ldots +f_1(t)t^{-y_1}
\end{equation*}

In a system of equations in $L_n$, some of the $x_i, f_j(t)$ and $y_j$ may be constants and some may be variables.  

Thus, any system of equations in $L_n$ is equivalent to a system of equations of the form: 
\begin{equation} \label{eq-1}
F_1(\bar x, t, t^{-1}) f_1(t)t^{-y_1} + \ldots + F_m(\bar x, t, t^{-1})f_m(t)t^{-y_m} 
= P(\bar x, t, t^{-1}) 
\end{equation} 
and
\begin{equation} \label{eq-2}
\sum_i c_ix_i + C = 0
\end{equation}
where $F_j(\bar x, t, t^{-1})= \sum_i \alpha_i t^{\sigma_i(\bar x)} $ where $\alpha_i = \pm 1$, and $\sigma_i(\bar x)$ is a linear combination of elements in $x$ and a constant, and $f_j(t)$ is a variable  that runs over $\Z_n[t]$, $y_j$ is a variable  that runs over  $\N$, $P(\bar x, t, t^{-1})$ is a polynomial in $\Z_n[t,t^{-1}]$ with linear combinations of $\bar x$ in the exponents of $t$ and $c_i, C \in \Z$.

We begin by solving the linear system (\ref{eq-2}) as in Section 2. If the system does not have a solution, then system (\ref{eq-1}) will not have a solution either. If the system has a solution, then we substitute those values of $x_i$ into system (\ref{eq-1}). Some $x_i$ will be replaced by integers, others by linear combinations of elements in $\bar x$ and constants. 

Now we solve system (\ref{eq-1}). This system can be put in Smith normal form by regarding the terms $f_j(t)t^{-y_j}$ as variables, the terms $F_j(\bar x, t, t^{-1})$ as coefficients, and $P(\bar x, t, t^{-1})$ as a constant coefficient.

Thus, the system is equivalent to a disjunction of systems of the form:

\begin{equation} \label{eq-3}
F_s'(\bar x, t, t^{-1})f_s(t)t^{-y_s} = \sum_{i>q} F'_{s_i}(\bar x, t, t^{-1}) f_i(t)t^{-y_i} + P_s'(\bar x, t, t^{-1})
\end{equation}
for $s = 1, \ldots, q$, and
\begin{equation} \label{eq-4}
\sum_i a_it^{\sigma_i(\bar x, d_i)} =0
\end{equation}
where $a_i, \in \Z_n$ and $\sigma_i(\bar x, d_i)$ is a linear combination of elements in $\bar x$ with constants.


To solve system (\ref{eq-4}), we begin by grouping terms in each equation such that the sum of the coefficients of each group is zero modulo $n$. If there is no way to group each equation in the system in this way, then this system does not have a solution. For, suppose there is a solution to system (\ref{eq-4}), then after substituting the solution in each equation and simplifying, the coefficients of each $t^i$ should be zero in each equation, thus the sum of the coefficients of $t^i$ before simplifying must be zero modulo $n$. 

There may be many ways to group the terms of each equation. We fix one system after grouping and
for each equation, we set the powers of $t$ in the terms that were grouped together equal to each other, consequently obtaining a system of linear equations.

For example in $L_5$, the equation 
\begin{equation*}
3t^{3-x_1+x_2}+4t^{-2+x_1}+2t^{x_3-2}+1=0
\end{equation*} 
can be grouped as follows:
\begin{equation*}
(3t^{3-x_1+x_2}+2t^{x_3-2})+(4t^{-2+x_1}+1)=0
\end{equation*}

We then obtain the linear system
 \begin{equation*}
 3-x_1+x_2=x_3-2 
 \end{equation*}
 \begin{equation*}
 -2+x_1=0
 \end{equation*}
 
 We now solve this system of linear equations. If there is no solution, system (\ref{eq-3}) has no solution in this branch. If there is a solution, then we substitute the general solution back into (\ref{eq-3}).
 
 {\bf Proof of Theorem 2} 
 Every non-abelian abelian-by-cyclic group $A\rtimes _{\phi}\mathbb Z$ has commutator width 1.  Indeed,   the derived subgroup  equals the image of $\phi -1\in End(A)$ that consists of commutators. The action of $\phi$ on $A/(\phi -1)A$ is trivial, therefore $(A\rtimes _{\phi}\mathbb Z)'\in (\phi -1)A.$ Therefore, everything again reduces to genus zero equations.

 Consider $$\Pi _{i=1}^n\bar x_i\bar c_i\bar x_i^{-1}=1$$  and let $\bar x_i=\begin{pmatrix} t^{x_i} & f_i(t)t^{-y_i} \\ 0 & 1 \end{pmatrix}$ Then $\Pi _{i=1}^n\bar x_i\bar c_i\bar x_i^{-1}=\begin{pmatrix} t^{\sum _{i=1}^i s_i} & P \\ 0 & 1 \end{pmatrix}$, where

 $$P=\sum _{i=1}^n (f_i(t)t^{y_i}(1-t^{s_i})+c_it^{-d_i-x_i})t^{\sum_{j=1}^{i-1}s_i}$$

Therefore, $\sum _{i=1}^n s_i$. 

There are two possible cases. In the first case $s_i=0$ for all $i=1,\ldots ,n$, then the system is equivalent to a system $\sum _{i=1}^nc_it^{\bar y_i}=0$ for new integer variables $\bar y_i, i=1,\ldots ,n$. This is exactly system (\ref{eq-4}).

In the second case, some $s_i$ is non-zero. Take $s=gcd(|s_1|,\ldots ,|s_i|)$, then $(t^{s}-1)=gcd((t^{|s_i|}-1), i=1,\ldots ,n)$.  For non-prime $n$, the ring  $\mathbb Z_n[t,t^{-1}]$ is not a domain. But one can still use an analogue of the Euclidean algorithm and induction on $n$, to show that $t^{s}-1$ can be represented as a linear combination of $t^{|s_i|}-1, i=1,\ldots ,n$ with coefficients in $\mathbb Z_n[t,t^{-1}]$. Quadratic equation $\Pi _{i=1}^n\bar x_i\bar c_i\bar x_i^{-1}=1$ in this case has a solution if and only if the  congruence
$$\sum _{i=1}^nc_it^{\bar y_i}\equiv 0 (mod (t^{s}-1))$$ has a solution in $\mathbb Z[t,t^{-1}]$. To check this congruence we only have to consider $\bar y_i$'s  such that $-s\leq y_i\leq s$. This finishes the proof in the second case.\end{proof}
 
 The second statement of the theorem is proved similarly to the proof for $BS(1,k)$. 

\begin{theorem} \label{th-L}
Quadratic equations in $L$  are decidable. There is also an algorithm to decide if an arbitrary coefficient free system has a non-trivial solution.
\end{theorem}
A system of equations in $L$ reduces to equations of the form (\ref{eq-1}) and (\ref{eq-2}), but the $f_j(t)$ are variables in $\Z[t]$ and $P(\bar x, t, t^{-1})$ is a polynomial with coefficients in $\Z$. To solve system (\ref{eq-4}) we group terms whose coefficients add up to $0$. Then we reduce this system to system (\ref{eq-5}).

Theorem \ref{th-L} implies the following corollary.
\begin{cor}
The Diophantine problem is decidable for coefficient free and for quadratic equations in $\Z^n \wr \Z$.\end{cor}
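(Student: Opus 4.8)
The plan is to reduce the Diophantine problem in $\Z^n\wr\Z$ to $n$ simultaneous copies of the problem already settled for $L=\Z\wr\Z$ in Theorem \ref{th-L}, coupled only through the shared shift exponent. The key structural observation is that for $A=\Z^n$ the coefficient ring is the \emph{product} ring $R=\Z\times\cdots\times\Z$, so that $R[t,t^{-1}]\cong \Z[t,t^{-1}]^n$: a Laurent polynomial over $R$ is simply an $n$-tuple of Laurent polynomials over $\Z$. In the matrix model $M=\left(\begin{array}{cc} t^x & P \\ 0 & 1\end{array}\right)$ of $\Z^n\wr\Z$, a group variable therefore decomposes into a single exponent $x\in\Z$ together with an $n$-tuple $P=(P^{(1)},\dots,P^{(n)})\in\Z[t,t^{-1}]^n$. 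Running the reduction of Theorem \ref{th-L} verbatim, a finite system of equations in $\Z^n\wr\Z$ becomes a system of the forms (\ref{eq-1}) and (\ref{eq-2}), where now the coefficients $F_j(\bar x,t,t^{-1})=\sum_i\alpha_it^{\sigma_i(\bar x)}$ still have $\alpha_i=\pm1$ and hence lie in the diagonal copy $\Z\hookrightarrow R$ (they are \emph{scalar}, identical in every coordinate), while the unknown polynomials $f_j(t)$ range over $R[t]=\Z[t]^n$ and $P(\bar x,t,t^{-1})\in\Z[t,t^{-1}]^n$.

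Projecting (\ref{eq-1}) onto its $n$ coordinates, I would observe that it is equivalent to the conjunction, over $l=1,\dots,n$, of the $\Z$-equations $\sum_j F_j(\bar x,t,t^{-1})\,f_j^{(l)}(t)\,t^{-y_j}=P^{(l)}(\bar x,t,t^{-1})$, each of which has exactly the form handled in Theorem \ref{th-L}. These $n$ subsystems share the exponent variables $\bar x$ and $y_j$ and share the scalar coefficients $F_j$, but have independent polynomial unknowns $f_j^{(l)}$ and independent constants $P^{(l)}$. Since $R$ is a product of integral domains, working coordinate-wise removes the zero divisors of $R$ entirely, so only the domain machinery of Theorem \ref{th-L} (Lemma \ref{lemma-5}) is needed, not the zero-divisor handling of Lemma \ref{lemma-4}. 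I would first solve the linear exponent system (\ref{eq-2}) over $\Z$ as before; then, because the coefficients $F_j$ are scalar, the Smith-normal-form triangularization producing (\ref{eq-3}) is carried out by $\gcd$-manipulations over $\Z[t,t^{-1}]$ that apply \emph{simultaneously} to all $n$ coordinates, yielding a joint triangular system with scalar leading coefficients $F_s'$ and $n$-tuple right-hand sides.

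I would then run the two-procedure scheme on this joint system. Procedure~1 enumerates $\bar x\in\Z$, $y_i\in\N$ and $f_i(t)\in R[t]=\Z[t]^n$ and tests each candidate, halting if a solution exists. For Procedure~2 I would prove the $R$-analogue of Lemma \ref{lemma-5}: the joint system (\ref{eq-5}) has an integer solution if and only if there is a choice of the shared $\bar x,y_i$ and of the $f_i$ $(i>q)$ such that in \emph{every} coordinate the leading coefficient $F_s'$ divides the corresponding right side in $\Z[t]$. If no solution exists, then for every valuation some coordinate $l$ and index $s$ exhibit a polynomial $h(t)\in\Z[t]$ dividing the left side but not the right, and reducing modulo a suitable prime $p$ yields a monic $h(t)\in\Z_p[t]$ with the same failure in $\Z_p[t]$. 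Procedure~2 searches over primes $p$ and monic $h(t)\in\Z_p[t]$, and at each pair $(p,h)$ the check is finite across all $n$ coordinates, using that the $f_i(t)$ take finitely many values modulo $h(t)$ and that $t^{(\cdot)}$ is periodic modulo $h(t)$, so only finitely many residues of the exponents need be tried.

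The main obstacle, and the point deserving the most care, is the coupling through the shared exponent variables: one cannot decide the coordinates independently, since a valuation solving coordinate $l$ might fail in coordinate $l'$, and the failing coordinate may itself vary with the exponent values. Thus the non-solvability certificate is of the form ``for every valuation of $\bar x,y_i,f_i$ there exist a coordinate, a prime $p$, and a monic $h(t)\in\Z_p[t]$ witnessing non-divisibility,'' and the crux is to confirm that this disjunction over the $n$ coordinates still makes Procedure~2 halt — i.e.\ that finitely much modular information at a single pair $(p,h)$ detects global non-solvability of the joint system. I expect this to follow from the same periodicity and finiteness that underlie Lemma \ref{lemma-5}, now applied to the $n$-fold product, since $R=\Z^n$ is a finite product of copies of $\Z$ and divisibility in $R[t]$ is decided coordinate-wise; the remaining content is bookkeeping over the $n$ coordinates and their residues.
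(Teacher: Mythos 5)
Your proposal is correct and takes essentially the same route as the paper: project each equation onto the $n$ coordinates of $\Z^n$ (the coefficients $F_j$ being scalar), obtaining a joint system of equations of the forms (\ref{eq-1})--(\ref{eq-2}) over $\Z[t]$ with shared exponent variables, and then appeal to the machinery of Theorem \ref{th-L}. The coupling through the shared $\bar x, y_i$ that you single out as the main obstacle requires no new argument, since Theorem \ref{th-L} already decides arbitrary finite \emph{systems} of such equations (equations sharing variables is exactly what a system is), which is precisely how the paper disposes of the corollary in two lines.
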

\begin{proof}
Equations in $\Z^n \wr \Z$ have the same form as equations (\ref{eq-1}) and (\ref{eq-2}) in the proof of Theorem \ref{th-L}, with the exception that the terms $f_i(t)$ are in the ring $\Z^n[t]$. Each equation of the form (\ref{eq-1}) is equivalent to $n$ equations, each corresponding to a component of $\Z^n$. Thus, any system of equations in $\Z^n \wr \Z$ is equivalent to a system in $\Z \wr \Z$, so the decidability follows from the decidability of $\Z \wr \Z$.
\end{proof}
Combining Theorems \ref{th-Ln}  and \ref{th-L} we obtain the second main result.
\begin{theorem}
The Diophantine problem is decidable  for coefficient free and for quadratic equations in $A \wr \Z$, where $A$ is a finitely generated abelian group.
\end{theorem}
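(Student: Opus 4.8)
The plan is to reduce a system of equations in $A \wr \Z$ to a combined system over the factors $\Z^m \wr \Z$ and $\Z_{n_j} \wr \Z$ that have already been handled, and then to run the two-procedure decision scheme on the combined system. Write $A = \Z^m \oplus \Z_{n_1} \oplus \ldots \oplus \Z_{n_k}$. Since the shift action of $\Z$ on $\bigoplus_{i\in\Z} A$ preserves the decomposition $\bigoplus_{i\in\Z} A = (\bigoplus_{i\in\Z}\Z^m)\oplus\bigoplus_{j}(\bigoplus_{i\in\Z}\Z_{n_j})$, the group $A \wr \Z$ is the fibre product $(\Z^m \wr \Z)\times_\Z(\Z_{n_1}\wr\Z)\times_\Z\cdots\times_\Z(\Z_{n_k}\wr\Z)$ over the common position subgroup $\Z$. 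Concretely, each group variable corresponds to a single integer position exponent $x$ together with a lamp configuration $P\in R[t,t^{-1}]$, $R=\Z^m\oplus\Z_{n_1}\oplus\ldots\oplus\Z_{n_k}$, and $P$ splits into its $\Z^m$-component and its $\Z_{n_j}$-components.

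First I would split each equation of $A \wr \Z$ into its position part and its lamp part, exactly as in the matrix description preceding Theorem \ref{th-Ln}. The position parts give a linear system over $\Z$ of the form (\ref{eq-2}), which I solve by the Smith normal form reduction of Section 2; if it is unsolvable the whole system is unsolvable, and otherwise I substitute the general solution, fixing some position variables to integers and leaving others free. The lamp parts then split componentwise: the $\Z^m$-component yields equations of the form (\ref{eq-1}) over $\Z[t,t^{-1}]$, which by the Corollary on $\Z^n\wr\Z$ reduce to a finite system over $\Z[t,t^{-1}]$ handled by Theorem \ref{th-L}, while each $\Z_{n_j}$-component yields equations of the form (\ref{eq-1}) over $\Z_{n_j}[t,t^{-1}]$ handled by Theorem \ref{th-Ln}. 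Putting everything together, the original system is equivalent to a single finite conjunction of equations, each living over $\Z[t,t^{-1}]$ or over some $\Z_{n_j}[t,t^{-1}]$, all sharing the common integer exponent variables $\bar x$ (and, after normalising, the shifts $\bar y$), together with the already-solved position constraints.

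It remains to decide this combined finite system, and here I would run the same two alternating procedures. Procedure 1 enumerates all nonnegative integer values of the exponent variables $\bar x,\bar y$ and all polynomial values of the lamp variables (over $\Z[t]$ for the free components and over $\Z_{n_j}[t]$ for the torsion components) and tests each equation in its own ring; if a solution exists it is found. Procedure 2 dovetails the modular-certificate searches of Lemmas \ref{lemma-4} and \ref{lemma-5}: for a chosen tuple of monic polynomials, one over a prime reduction $\Z_p[t]$ for the free components and one over each $\Z_{n_j}[t]$ for the torsion components, the variable $t$ becomes periodic, so the shared exponent variables collapse to finitely many residues and every lamp variable to finitely many residues; one then tests solvability of the reduced combined system and halts if no residue assignment works.

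The main obstacle is precisely the coupling through the shared exponent variables $\bar x$: the per-component Lemmas \ref{lemma-4} and \ref{lemma-5} certify nonsolvability of each component in isolation, but a correct Procedure 2 must exhibit a single modular obstruction that rejects every integer valuation of $\bar x$ simultaneously across all components. I expect to resolve this by noting that there are only finitely many components and finitely many equations, that within each component nonsolvability is already witnessed by a finite modulus, and that reduction modulo any fixed polynomial forces $t$---hence each $\sigma_i(\bar x)$---to be periodic, so that a single combined modulus leaves only finitely many residue classes of $\bar x$ to refute. Together with the fact that exactly one of the two procedures halts on each component, this shows exactly one halts on the combined system, which decides solvability and completes the proof.
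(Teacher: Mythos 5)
Your proposal is correct and follows essentially the same route as the paper: decompose $A$ into $\Z^m$ and the torsion components $\Z_{n_j}$, split each lamp equation componentwise into systems over $\Z[t,t^{-1}]$ and $\Z_{n_j}[t,t^{-1}]$ sharing the exponent variables, and decide the conjunction with the two alternating procedures of Theorems \ref{th-Ln} and \ref{th-L}. The paper compresses all of this into the single phrase ``solving these systems simultaneously''; your explicit treatment of the coupling through the shared variables $\bar x, \bar y$ (a combined modulus making $t$ periodic, leaving finitely many residue classes to refute) is exactly what that phrase must mean, so you have simply filled in detail the paper leaves implicit.
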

\begin{proof} Let $A=\mathbb Z^m\oplus \mathbb Z_{n_1}\oplus\ldots \oplus \mathbb Z_{n_k}$.
Equations in $A \wr \Z$ have the same form as equations (\ref{eq-1}) and (\ref{eq-2}) in the proof of Theorems \ref{th-Ln}, \ref{th-L} with the exception that the terms $f_i(t)$ are in the ring $R[t]$ (recall that $R$ is the same as $A$ but viewed as a ring). Each system of the form (\ref{eq-1}) is equivalent to several systems, some of them over $\mathbb Z$ and some over $\mathbb Z_{n_i}$, each corresponding to a component of $\mathbb Z^m\oplus \mathbb Z_{n_1}\oplus\ldots \oplus \mathbb Z_{n_k}.$ Solving these systems simultaneously we will solve the original system.\end{proof}

We conclude with some open problems.

\begin{problem} Is the Diophantine problem decidable in $BS(1,k)$ and in wreath products $A\wr \Z,$ where  $A$ is a finitely generated abelian group?
\end{problem}

\begin{problem} Is the existential theory of $BS(1,k)$ and wreath products $A\wr \Z,$ where  $A$ is a finitely generated abelian group,  decidable?
\end{problem}

\begin{problem} Describe finitely generated metabelian groups with decidable Diophantine problem.
\end{problem}

20F16, 20F70
\end{document}